\def\R{{\mathbb R}}
\def\N{\mathbb{N}}
\def\C{\mathbb{C}}
\def\Z{\mathbb{Z}}
\def\D{\mathbb{D}}
\def\F{\mathbb{F}}
\def\ii{\mathrm{i}}
\newtheorem{prop}{\bf Proposition}[section]
\newtheorem{thm}[prop]{\bf Theorem}
\newtheorem{cor}[prop]{\bf Corollary}
\newtheorem{lem}[prop]{\bf Lemma}
\newtheorem{rmk}[prop]{\it Remark}
\begin{document}

\title[$M_{d}$-AP and Unitarisability]{{\bf\Large The $M_d$-Approximation Property and Unitarisability}}

%%    Information for first author
\author[I. Vergara]{Ignacio Vergara}
%%    Address of record for the research reported here
\address{Saint-Petersburg State University, Leonhard Euler International Mathematical Institute,
14th Line 29B, Vasilyevsky Island, St. Petersburg, 199178, Russia}

%%    Current address
%\curraddr{Department of Mathematics and Statistics,
%Case Western Reserve University, Cleveland, Ohio 43403}
\email{ign.vergara.s@gmail.com}
%%    \thanks will become a 1st page footnote.
\thanks{This work is supported by the Ministry of Science and Higher Education of the Russian Federation, agreement № 075-15-2022-287}

%    General info
\makeatletter
\@namedef{subjclassname@2020}{%
  \textup{2020} Mathematics Subject Classification}
\makeatother

\subjclass[2020]{Primary 43A07; Secondary 22D10, 22D12, 46L07, 20F65}

\keywords{Unitarisable groups, Herz--Schur multipliers, weak amenability, approximation property, CAT(0) cube complexes}

\begin{abstract}
We define a strengthening of the Haagerup--Kraus approximation property by means of the subalgebras of Herz--Schur multipliers $M_d(G)$ ($d\geq 2$) introduced by Pisier. We show that unitarisable groups satisfying this property for all $d\geq 2$ are amenable. Moreover, we show that groups acting properly on finite-dimensional CAT(0) cube complexes satisfy $M_d$-AP for all $d\geq 2$. We also give examples of non-weakly amenable groups satisfying $M_d$-AP for all $d\geq 2$.
\end{abstract}

\begingroup
\def\uppercasenonmath#1{} % this disables uppercasing title
\let\MakeUppercase\relax % this disables uppercasing authors
\maketitle
\endgroup

\section{{\bf Introduction}}
Let $\pi:G\to\mathcal{B}(H)$ be a \textit{uniformly bounded} representation of a group $G$ on a Hilbert space $H$. This means that
\begin{align*}
\sup_{t\in G}\|\pi(t)\| < \infty.
\end{align*}
We say that $\pi$ is \textit{unitarisable} if there exists an invertible operator $S\in\mathcal{B}(H)$ such that the representation $S\pi(\,\cdot\,)S^{-1}$ is unitary, i.e., $\|S\pi(t)S^{-1}\|=1$ for all $t\in G$. We say that $G$ is \textit{unitarisable} if every uniformly bounded representation $\pi:G\to\mathcal{B}(H)$ is unitarisable. Day \cite{Day} and Dixmier \cite{Dix} independently proved that all amenable groups are unitarisable. The converse (known now as the Dixmier problem) remains open.

\noindent {\bf Dixmier's question:} Is every unitarisable group amenable?

We refer the reader to \cite{Pis2} and \cite{Pis} for very nice introductions to these topics. Let us point out that considerable progress has been made in more recent years regarding this question, by means of a wide ranging of tools, including $L^2$-Betti numbers \cite{EpsMon, Osi}, random embeddings \cite{MonOza,Alp} and Littlewood functions \cite{GGMT}.

In \cite{Pis}, Pisier introduced a family of algebras $M_d(G)$ ($d\geq 1$) associated to a group $G$, such that $M_2(G)$ coincides with the Herz--Schur multiplier algebra of $G$. These spaces satisfy
\begin{align*}
B(G)\subset M_{d+1}(G) \subset M_d(G) \subset M_1(G) = \ell_\infty(G),\quad (d\geq 1).
\end{align*}
Here $B(G)$ stands for the Fourier--Stieltjes algebra of $G$, i.e., the space of coefficients of unitary representations of $G$. Bo\.{z}ejko proved \cite{Boz} that $G$ is amenable if and only if $M_2(G)=B(G)$. Observe that in this case all the algebras $M_d(G)$ ($d\geq 2$) are the same. Pisier showed \cite[Theorem 2.9]{Pis} that something similar happens for unitarisable groups.

\begin{thm}[Pisier]\label{Thm_Pis}
Let $G$ be a unitarisable group. Then there exists $d\geq 1$ such that $M_d(G)=B(G)$.
\end{thm}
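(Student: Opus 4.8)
The plan is to prove the two inclusions separately. Since the chain $B(G)\subseteq M_{d+1}(G)\subseteq M_d(G)$ shows that $B(G)$ is contractively contained in every $M_d(G)$, it suffices to produce a single $d\ge 1$ for which $M_d(G)\subseteq B(G)$; a closed-graph argument then upgrades this set-theoretic inclusion to an isomorphism of Banach spaces. The whole difficulty is therefore to show that, for unitarisable $G$, the decreasing sequence of algebras $M_d(G)$ eventually falls inside $B(G)$.

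First I would record a soft but crucial consequence of unitarisability: coefficients of uniformly bounded representations already lie in $B(G)$. Indeed, if $\pi$ is uniformly bounded and $u=S\pi(\,\cdot\,)S^{-1}$ is unitary, then for vectors $\xi,\eta$ one has $\langle \pi(g)\xi,\eta\rangle=\langle u(g)(S\xi),(S^{-1})^{*}\eta\rangle$, a coefficient of the unitary representation $u$; hence it belongs to $B(G)$. Thus for unitarisable $G$ the space $B(G)$ coincides with the set of coefficients of uniformly bounded representations, and the desired inclusion $M_d(G)\subseteq B(G)$ becomes equivalent to the assertion that every element of $M_d(G)$ is a coefficient of a uniformly bounded representation.

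The heart of the argument is to produce such a representation from the defining factorisation of $M_d$. Recall that $\varphi\in M_d(G)$ with $\|\varphi\|_{M_d}\le 1$ means $\varphi(t_1\cdots t_d)=a_1(t_1)\cdots a_d(t_d)$ for bounded operator-valued functions $a_i\colon G\to \mathcal{B}(H_i,H_{i-1})$ with $H_0=H_d=\mathbb{C}$. The key input I would invoke is Pisier's quantitative reformulation of unitarisability, namely the finiteness of the similarity degree: there exist $d_0\ge 1$ and $C>0$ such that every uniformly bounded representation $\pi$ is unitarisable with similarity constant at most $C\|\pi\|_{\mathrm{ub}}^{d_0}$. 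Taking $d$ comparable to $d_0$, I would assemble the pieces $a_1,\dots,a_d$ into a single representation $\pi_\varphi$ on a suitable direct sum / tensor construction, together with vectors realising $\varphi$ as a coefficient, arranging that the length of the factorisation matches $d_0$ and forces $\pi_\varphi$ to be genuinely uniformly bounded (not merely bounded), with $\|\pi_\varphi\|_{\mathrm{ub}}$ controlled by $\|\varphi\|_{M_d}$. Combined with the previous paragraph, this places $\varphi$ in $B(G)$ and yields $M_d(G)\subseteq B(G)$.

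The main obstacle is twofold and lives entirely in this last step. On the one hand one must establish the automatic uniformity behind the finite similarity degree: plain unitarisability provides no a priori uniform, let alone polynomial, control on similarity constants, and extracting $d_0$ and $C$ requires an iteration and Baire-category argument on the space of uniformly bounded representations. On the other hand, the passage from the formal $d$-fold factorisation of $\varphi$ to an honestly uniformly bounded representation is delicate, precisely because a length-$d$ product of bounded pieces is in general only a bounded object; it is the matching of $d$ with the similarity degree that makes the construction uniformly bounded, and verifying this compatibility --- in effect identifying $\inf\{d:M_d(G)=B(G)\}$ with the similarity degree of $G$ --- is where I expect the real work to lie.
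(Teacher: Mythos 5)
Your first two paragraphs are sound: reducing the theorem to the inclusion $M_d(G)\subseteq B(G)$ for a single $d$ (closed graph then upgrades it to a norm equivalence), and the observation that for unitarisable $G$ every coefficient of a uniformly bounded representation lies in $B(G)$, are both correct. The gap is your third paragraph, and it is not a repairable technicality: the mechanism you propose cannot work. The data of an $M_d$-factorisation $\varphi(t_1\cdots t_d)=a_1(t_1)\cdots a_d(t_d)$ consists of arbitrary bounded operator-valued functions $a_i$ satisfying no multiplicativity or cocycle relation, only that single product identity; there is nothing to ``assemble'' into a homomorphism $\pi_\varphi$ of $G$. Moreover, the statement your construction is meant to establish --- that every $\varphi\in M_d(G)$ is a coefficient of a uniformly bounded representation --- cannot follow from any recipe whose input is just the factorisation data, since such data exists for every group while the conclusion fails in general: coefficients of uniformly bounded representations lie in $\bigcap_{d'\geq 1}M_{d'}(G)$ (as recalled in Section 3, $\|\varphi\|_{M_{d'}}\leq|\pi|^{d'}\|\xi\|\|\eta\|$ for every $d'$), whereas Pisier showed in the same paper [Pis] that the chain $M_{d+1}(G)\subset M_d(G)$ does not stabilise when $G$ contains a free subgroup. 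So finite similarity degree would have to enter the construction of $\pi_\varphi$ in some structural way, and your proposal never says how; your final paragraph concedes exactly this, which means the entire content of the theorem is concentrated in the step you leave open.

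For comparison, the theorem is quoted in this paper from [Pis, Theorem 2.9] without proof, and Pisier's own argument runs on the \emph{predual} side and never realises a multiplier as a coefficient. Unitarisability gives, by a direct-sum argument, a finite bound $\mathrm{sim}(\pi)\leq f(|\pi|)$; his similarity-degree (``length'') theorem then shows --- and this is where the Baire-category/iteration work lives --- that this forces a factorisation statement for elements of $\C[G]$ and matrices over it: every such element of $C^*(G)$-norm less than $1$ factors as $\alpha_0 D_1\alpha_1\cdots D_d\alpha_d$, where the $D_i$ are diagonal matrices whose entries are multiples of point masses $\delta_t$ and the $\alpha_i$ are scalar matrices with $\prod_i\|\alpha_i\|$ bounded by a universal constant. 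That statement says precisely that the $X_d(G)$-norm and the $C^*(G)$-norm are equivalent on $\C[G]$, and dualising this equivalence is what yields $M_d(G)=B(G)$; the polynomial similarity bound you invoke as an input is in Pisier's development an equivalent reformulation of this factorisation, not a prior ingredient from which one then builds representations. The only place where coefficients of representations appear is the easy inclusion (uniformly bounded coefficients lie in $M_d$); the reverse realisation of $M_d$-functions as coefficients is available only \emph{a posteriori}, once $M_d(G)=B(G)$ has been proved.
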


In \cite{HaaKra}, Haagerup and Kraus defined a very weak form of amenability for locally compact groups, which they called the approximation property (AP). In the present work, we shall only focus on discrete groups. We say that $G$ has the AP if the constant function $1$ is in the $\sigma(M_2(G),X_2(G))$-closure of the group algebra $\C[G]\subset M_2(G)$. Here $X_2(G)^*=M_2(G)$; see \S\ref{Sec_Gen_mult} for details. This property is indeed much weaker than amenability, and in fact, there were no known concrete examples of groups without AP until the work of Lafforgue and de la Salle \cite{LafdlS}, where they showed, among many other things, that $\operatorname{SL}(3,\Z)$ does not satisfy AP. This led to a very fruitful series of articles \cite{HaadLa,HaadLa2} culminating with a complete characterisation of connected Lie groups without AP \cite{HaKndL}.

It turns out that the algebras $M_d(G)$ are dual spaces too, which allows us to extend the definition of the AP to these new spaces of multipliers. Let $d\geq 1$. We say that $G$ has the $M_d$-AP if the constant function $1$ is in the $\sigma(M_d(G),X_d(G))$-closure of $\C[G]\subset M_d(G)$, where $X_d(G)^*=M_d(G)$. We will see that $M_{d+1}$-AP implies $M_d$-AP, but we do not know whether these properties are different for $d\geq 2$. On the other hand, since $M_1(G)=\ell_\infty(G)$, every group has $M_1$-AP, so clearly it is different from $M_2$-AP, which is the AP of Haagerup and Kraus.

Our first result gives a positive answer to the Dixmier problem for groups satisfying $M_d$-AP for every $d\geq 1$.

\begin{thm}\label{Thm_1}
Let $G$ be a group satisfying $M_d$-AP for all $d\geq 1$. If $G$ is unitarisable, then it is amenable.
\end{thm}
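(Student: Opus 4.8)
The plan is to combine Pisier's Theorem~\ref{Thm_Pis} with the classical characterisation of amenability via weak containment of the trivial representation in the regular representation. Assume $G$ is unitarisable. By Theorem~\ref{Thm_Pis} there is some $d_0\geq 1$ with $M_{d_0}(G)=B(G)$. Since the inclusion $B(G)\hookrightarrow M_{d_0}(G)$ is contractive and, by hypothesis, surjective, the closed graph theorem yields a constant $C$ with $\|u\|_{B(G)}\leq C\,\|u\|_{M_{d_0}(G)}$ for every $u$; in particular the two norms are equivalent.

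First I would transfer the weak-$*$ approximation provided by $M_{d_0}$-AP into the duality $B(G)=C^*(G)^*$. The natural pairing embeds $\C[G]$ into the predual $X_{d_0}(G)$ via $\langle u,f\rangle=\sum_{g}u(g)f(g)$, and for $f\in\C[G]$ one has $\|f\|_{X_{d_0}(G)}=\sup\{|\langle u,f\rangle|:\|u\|_{M_{d_0}(G)}\leq 1\}$. Viewing $f$ simultaneously as an element of $C^*(G)$ and using the norm equivalence above together with $B(G)=C^*(G)^*$ gives $|\langle u,f\rangle|\leq\|u\|_{B(G)}\|f\|_{C^*(G)}\leq C\,\|u\|_{M_{d_0}(G)}\|f\|_{C^*(G)}$, whence $\|f\|_{X_{d_0}(G)}\leq C\,\|f\|_{C^*(G)}$. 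Thus the inclusion $\C[G]\hookrightarrow X_{d_0}(G)$ extends to a bounded map $C^*(G)\to X_{d_0}(G)$, and the two pairings agree by density. Consequently the identity map $M_{d_0}(G)=B(G)$ is continuous from $\sigma(M_{d_0}(G),X_{d_0}(G))$ to $\sigma(B(G),C^*(G))$.

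Next I would invoke $M_{d_0}$-AP (available since $M_d$-AP is assumed for every $d$) to choose a net $(u_i)\subset\C[G]$ with $u_i\to 1$ in $\sigma(M_{d_0}(G),X_{d_0}(G))$. By the previous step $u_i\to 1$ in $\sigma(B(G),C^*(G))$, that is $\langle u_i,a\rangle\to\varepsilon(a)$ for every $a\in C^*(G)$, where $\varepsilon$ denotes the trivial character (the coefficient $1$). Each $u_i\in\C[G]$ is a coefficient of the left regular representation $\lambda$, so the functional $\langle u_i,\cdot\rangle$ annihilates $\ker\lambda\subset C^*(G)$. Passing to the weak-$*$ limit, $\varepsilon$ also annihilates $\ker\lambda$, so the trivial representation factors through $C^*_r(G)$; by the classical amenability criterion this forces $G$ to be amenable. (Equivalently, this says $1$ lies in $B_r(G)$, the weak-$*$ closure of $A(G)$ in $B(G)$.)

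I expect the main obstacle to be the second step: reconciling the abstract predual $X_{d_0}(G)$ of \S\ref{Sec_Gen_mult} with the predual $C^*(G)$ of $B(G)$, i.e.\ verifying that the $\sigma(M_{d_0},X_{d_0})$ topology is fine enough to test against all of $C^*(G)$. The key point is that once $M_{d_0}(G)=B(G)$ as Banach spaces, the norm comparison makes the canonical embedding $C^*(G)\hookrightarrow C^*(G)^{**}=B(G)^*$ factor through $X_{d_0}(G)$, so no further information about the internal structure of $X_{d_0}(G)$ is needed beyond its natural pairing with $\C[G]$.
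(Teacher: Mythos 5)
Your proof is correct, and its skeleton coincides with the paper's: invoke Pisier's Theorem~\ref{Thm_Pis} to get $M_{d_0}(G)=B(G)$, transfer the $\sigma(M_{d_0},X_{d_0})$-convergence furnished by the $M_{d_0}$-AP into $\sigma(B(G),C^*(G))$-convergence, and then apply a classical amenability criterion. The paper compresses your second step into the single remark that $X_{d_0}(G)$ is isomorphic to $C^*(G)$; your closed-graph/norm-comparison argument is precisely the justification of that remark, so there is no real difference there. Where you genuinely diverge is the endgame. The paper concludes by citing Proposition~\ref{Prop_A_dense} and Remark~\ref{Rmk_C_dense}, i.e.\ Leptin's theorem in the form ``$G$ is amenable iff $1$ lies in the weak* closure of the \emph{norm-one ball} $\{\varphi\in\C[G]:\|\varphi\|_{B(G)}\leq 1\}$''. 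Strictly speaking, the $M_{d_0}$-AP only produces a net in $\C[G]$ with no control on its $B(G)$-norm, so the bounded-ball criterion does not apply verbatim; this is a (repairable) terseness in the paper's argument. Your ending sidesteps the issue entirely: each $u_i\in\C[G]$ lies in $A(G)$ and hence annihilates $\ker\lambda\subset C^*(G)$, so the weak* limit $\varepsilon$ annihilates $\ker\lambda$ as well, the trivial representation factors through $C^*_r(G)$, and Hulanicki's criterion gives amenability. Equivalently, you use that the weak* closure of the \emph{subspace} $\C[G]$ is $B_r(G)$ and that $1\in B_r(G)$ forces amenability --- exactly the unbounded version of the criterion that the situation calls for. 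So your proof is, if anything, slightly tighter than the one in the paper, at the cost of invoking weak containment rather than the paper's stated Proposition~\ref{Prop_A_dense}.
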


This raises the question of how restrictive it is for a group to satisfy $M_d$-AP for every $d\geq 1$. We are far from having a complete answer to this question, but we can provide a sufficient condition coming from geometric group theory. In \cite{GueHig}, Guentner and Higson proved that groups acting properly on finite-dimensional CAT(0) cube complexes are weakly amenable with Cowling--Haagerup constant 1. We refer the reader to their paper for details on CAT(0) cube complexes. This result was obtained independently by Mizuta \cite{Miz} using different methods.

A group $G$ is said to be weakly amenable if there is $C\geq 1$ such that the constant function $1$ is in the $\sigma(M_2(G),X_2(G))$-closure of the set
\begin{align*}
\{\varphi\in\C[\Gamma]\ :\ \|\varphi\|_{M_2}\leq C\}.
\end{align*}
The Cowling--Haagerup constant of $G$ is the infimum of all $C\geq 1$ such that the condition above holds. Clearly, weak amenability implies AP, but the converse does not hold. Indeed, Haagerup showed \cite{Haa} that the group $\operatorname{SL}(2,\Z)\ltimes\Z^2$ is not weakly amenable. On the other hand, it has the AP because the AP is stable under semidirect products; see \cite[Corollary 1.17]{HaaKra}.

We can similarly define $M_d$-weak amenability by requiring that $1$ be in the $\sigma(M_d(G),X_d(G))$-closure of $\{\varphi\in\C[\Gamma]\ :\ \|\varphi\|_{M_d}\leq C\}$. Then, for all $d\geq 1$, $M_d$-weak amenability implies $M_d$-AP. Again, we will see that $M_{d+1}$-weak amenability implies $M_d$-weak amenability, but we do not know if they are different for $d\geq 2$. Every group is $M_1$-weakly amenable.

For every countable group $G$ acting properly on a finite-dimensional CAT(0) cube complex, Guentner and Higson constructed an analytic family of uniformly bounded representations of $G$ satisfying some very specific properties. Using a result of Valette \cite{Val}, their construction allowed them to conclude that every such $G$ is weakly amenable with Cowling--Haagerup constant 1. This extended previous results for groups acting on trees \cite{Pim,PytSzw,Val2} and right-angled Coxeter groups \cite{Jan}.

A key point in Valette's result \cite{Val} is the fact that every coefficient of a uniformly bounded representation of a group $G$ defines an element of $M_2(G)$. Since this is also true for $M_d(G)$, we are able to adapt Valette's proof to this new setting. Combining this with Guentner and Higson's construction, we obtain the following.

\begin{thm}\label{Thm_2}
Let $G$ be a countable group acting properly on a finite-dimensional CAT(0) cube complex. Then $G$ is $M_d$-weakly amenable for all $d\geq 1$.
\end{thm}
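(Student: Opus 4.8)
The plan is to adapt Valette's argument, as the excerpt suggests, replacing the role of $M_2(G)$ throughout with $M_d(G)$. The essential input is Guentner and Higson's analytic family of uniformly bounded representations. Concretely, for a countable group $G$ acting properly on a finite-dimensional CAT(0) cube complex, their construction yields a family $(\pi_z)$ of uniformly bounded representations of $G$ on a fixed Hilbert space, depending analytically on a complex parameter $z$ in some strip or disc, together with vectors $\xi, \eta$ such that the coefficients $\varphi_z(g) = \langle \pi_z(g)\xi, \eta\rangle$ interpolate between a proper function at one end of the parameter range and the constant function $1$ at the other, with uniform bounds on $\sup_z \|\pi_z\|_{cb}$ (equivalently, a uniform bound on the similarity constants). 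The properness ensures each $\varphi_z$ (away from the endpoint giving $1$) is finitely supported, or at least can be approximated by finitely supported functions in $\C[G]$, while the analytic/uniform-boundedness control gives a uniform norm bound in the relevant multiplier algebra.

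First I would record the key structural fact stated explicitly in the excerpt just before Theorem~\ref{Thm_2}: every coefficient of a uniformly bounded representation of $G$ defines an element of $M_d(G)$, with a norm estimate controlled by the uniform boundedness constant of the representation. This is the $M_d$-analogue of the $M_2$ fact underlying Valette's proof, and I would either cite Pisier's description of $M_d(G)$ via coefficients of uniformly bounded representations (the defining property of these algebras) or verify it directly from the definition of $M_d(G)$ in \S\ref{Sec_Gen_mult}. The crucial quantitative point is that the $M_d$-norm of $\varphi_z$ is bounded by a power of the uniform boundedness constant of $\pi_z$ (independent of $z$, up to the uniform bound from Guentner--Higson), so that the whole family $(\varphi_z)$ lives in a single norm-bounded subset of $M_d(G)$.

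Next I would assemble the weak-$*$ convergence argument. Using properness, each $\varphi_z$ for $z$ in the appropriate range is a finitely supported function, hence lies in $\C[G]$, and is bounded in $M_d(G)$ uniformly in $z$ by the previous step; call this uniform bound $C$. I would then let $z$ tend to the endpoint at which $\varphi_z \to 1$ pointwise. Pointwise convergence of a norm-bounded net in $M_d(G)$ upgrades to $\sigma(M_d(G), X_d(G))$-convergence on the bounded set $\{\varphi \in \C[G] : \|\varphi\|_{M_d} \le C\}$, using the separability of $X_d(G)$ (guaranteed since $G$ is countable) and the fact that the predual pairing with finitely supported elements is detected by pointwise evaluation. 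This places the constant function $1$ in the $\sigma(M_d(G), X_d(G))$-closure of $\{\varphi \in \C[G] : \|\varphi\|_{M_d} \le C\}$, which is precisely $M_d$-weak amenability with Cowling--Haagerup constant at most $C$.

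The main obstacle I expect is controlling the $M_d$-norm of the coefficients $\varphi_z$ uniformly in $z$, and in particular confirming that Guentner--Higson's uniform boundedness estimate (which was calibrated to give Cowling--Haagerup constant $1$ in the $M_2$ setting) translates into a uniform $M_d$-bound. In the $M_2$ case the relevant norm is exactly the completely bounded norm of the associated multiplier, and the passage from a uniformly bounded representation to an $M_2$-norm estimate is classical; for general $d$ one must check that the analogous estimate holds with a constant depending only on $d$ and the uniform boundedness constant, tending to $1$ as the latter does. Verifying this interpolation/estimate carefully — ensuring the bound does not blow up with $d$ or with the dimension of the cube complex in an uncontrolled way — is the technical heart of the proof; everything else (properness giving finite support, the weak-$*$ upgrade, and the reduction to the bounded convergence statement) is routine once this estimate is in hand.
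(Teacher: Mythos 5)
There is a genuine gap at the centre of your argument: the coefficients produced by the Guentner--Higson family are \emph{not} finitely supported, and properness does not make them so. In the relevant construction one has $\varphi_z(t)=\langle\pi_z(t)\xi,\xi\rangle=z^{\ell(t)}$ for a proper function $\ell:G\to\N$; for any $z\neq 0$ this function is supported on all of $G$ (properness only makes it decay at infinity). This matters because $M_d$-weak amenability requires the approximating net to lie in $\C[G]$ with a uniform $M_d$-bound, and your hedge ``or at least can be approximated by finitely supported functions'' is precisely where the work lies: one cannot simply truncate, since multiplying a multiplier by the indicator of a finite set does not control its $M_d$-norm. If instead you let $z=r\to 1$ along the real axis, where a good norm bound is available, you only place $1$ in the weak* closure of a bounded subset of $M_d(G)$, not of $\C[G]$, which is vacuous. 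The device that fills this hole in the paper (following Valette) is the Fej\'er kernel: setting $\psi_z(t)=z^{\ell(t)}$, one first shows, using the compact-subset bound on $|\pi_z|$, that $z\mapsto\psi_z\in M_d(G)$ is holomorphic, and then averages, $\Phi_{N,r}=\frac{1}{2\pi}\int_0^{2\pi}F_N\left(e^{\ii\theta}\right)\psi_{re^{\ii\theta}}\,d\theta$. Because the Fourier coefficients of $z\mapsto\psi_z$ are the indicators of the finite sets $\{\ell=k\}$, each $\Phi_{N,r}$ \emph{is} finitely supported, and Fej\'er convergence in the Banach space $M_d(G)$ gives $\|\Phi_{N,r}\|_{M_d(G)}\to\|\psi_r\|_{M_d(G)}$ as $N\to\infty$.

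Second, the worry you flag at the end --- that the bound might blow up with $d$ --- is resolved by a hypothesis your proposal never invokes: the representations $\pi_r$ are \emph{unitary} for $0<r<1$. Hence $\psi_r$ is a coefficient of a unitary representation with $\|\xi\|^2=r^{\ell(e)}=1$, so $\|\psi_r\|_{M_d(G)}\leq\|\psi_r\|_{B(G)}\leq 1$ for every $d$, since $B(G)\subset M_d(G)$ contractively. The estimate $\|\varphi_z\|_{M_d(G)}\leq|\pi_z|^d\|\xi\|\|\eta\|$, which does grow with $d$ off the real axis, is needed only locally in $z$, to establish holomorphy of $z\mapsto\psi_z$; the final constant comes entirely from the real axis and equals $1$ independently of $d$. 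Your weak*-upgrade step (pointwise convergence plus a uniform $M_d$-bound gives $\sigma(M_d(G),X_d(G))$-convergence, by density of $\C[G]$ in $X_d(G)$; separability is not needed) is fine, but it can only be run after the Fej\'er truncation supplies finitely supported approximants of norm close to $1$.
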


In particular, the Dixmier problem has a positive answer for all such groups. We must point out, however, that this fact can also be deduced (after imposing a bound on the orders of finite subgroups) from the Tits alternative for CAT(0) cube complexes \cite{SagWis}, and the fact that groups containing free subgroups are not unitarisable; see e.g. \cite[Chapter 2]{Pis2}.

Finally, we explore some ways of constructing new groups with $M_d$-AP. We show that this property is inherited by subgroups. Moreover, we show that, if $G/\Gamma$ has $M_d$-AP, where $\Gamma\lhd G$ is a normal amenable subgroup, then $G$ has $M_d$-AP. This fact allows us to give examples of non-weakly amenable groups satisfying $M_d$-AP for all $d\geq 1$, including some wreath products and $\operatorname{SL}(2,\Z)\ltimes\Z^2$. See Corollaries \ref{Cor_SL2} and \ref{Cor_wreath}.

We conclude this introduction with a few observations. In \cite{Pis}, Pisier showed that
\begin{align*}
M_{d+1}(\F_\infty)\neq M_d(\F_\infty),\quad\forall d\geq 1,
\end{align*}
where $\F_\infty$ is the free group on countably many generators. This suggests that $M_{d+1}$-AP might be strictly stronger than $M_d$-AP. However, since $\F_\infty$ has a proper action on a tree, Theorem \ref{Thm_2} says that it satisfies $M_d$-AP for all $d\geq 1$. Hence, the following question seems quite natural.

\noindent{\bf Question:} Is $M_d$-AP equivalent to AP for all $d\geq 2$?

A positive answer to this question would solve the Dixmier problem for groups with AP.

\subsection{Organisation of the paper}
In Section \ref{Sec_Gen_mult}, we define $M_d$-weak amenability and $M_d$-AP, and prove Theorem \ref{Thm_1}. In Section \ref{Sec_an_fam}, we focus on holomorphic families of representations and prove Theorem \ref{Thm_2}. Finally, we study some stability properties of $M_d$-AP in Section \ref{Sec_stab}, and construct examples of non-weakly amenable groups satisfying $M_d$-AP.

\section{{\bf Subalgebras of multipliers and approximation properties}}\label{Sec_Gen_mult}

Throughout this paper, $G$ will always denote a discrete group. The group algebra $\C[G]$ is the space of finitely supported functions $g:G\to\C$. For Hilbert spaces $H,K$, we denote by $\mathcal{B}(H,K)$ the space of bounded linear operators from $H$ to $K$. When $H=K$, we will write $\mathcal{B}(H)=\mathcal{B}(H,H)$.

Now we will define the subalgebras of Herz--Schur multipliers introduced by Pisier. We refer the reader to \cite{Pis} for a much more detailed treatment and references.

Let $G$ be a group and let $d\geq 1$ be an integer. We define $M_d(G)$ as the space of all functions $\varphi:G\to\C$ such that there are bounded maps $\xi_i:G\to\mathcal{B}(H_i,H_{i-1})$ ($i=1,\ldots,d$), where $H_i$ is a Hilbert space, $H_0=H_d=\C$, and
\begin{align}\label{phi_xi}
\varphi(t_1\cdots t_d)=\xi_1(t_1)\cdots \xi_d(t_d),\quad\forall t_1,\ldots,t_d\in G.
\end{align}
We define
\begin{align*}
\|\varphi\|_{M_d(G)}=\inf\left\{\sup_{t_1\in G}\|\xi_1(t_1)\|\cdots\sup_{t_d\in G}\|\xi_d(t_d)\|\right\},
\end{align*}
where the infimum runs over all possible decompositions as in \eqref{phi_xi}.

As observed by Pisier, $M_d(G)$ is a dual space. We define $X_d(G)$ as the completion of $\C[G]$ for the norm
\begin{align}\label{norm_X_d}
\|g\|_{X_d(G)}=\sup\left\{\left|\sum_{t\in G}\varphi(t)g(t)\right|\ :\ \varphi\in M_d(G),\ \|\varphi\|_{M_d(G)}\leq 1\right\}.
\end{align}
We have $X_d(G)^*=M_d(G)$ for the duality
\begin{align*}
\langle\varphi,g\rangle=\sum_{t\in G}\varphi(t)g(t),\quad\forall \varphi\in M_d(G),\ \forall g\in\C[G].
\end{align*}

We will say that $G$ is $M_d$\textit{-weakly amenable} if there is $C\geq 1$ such that the constant function $1$ is in the $\sigma(M_d(G),X_d(G))$-closure of the set
\begin{align*}
\{\varphi\in\C[\Gamma]\ :\ \|\varphi\|_{M_d}\leq C\}.
\end{align*}
Observe that $M_2(G)$ is the algebra of Herz--Schur multipliers of $G$. Thus, $M_2$-weak amenability and weak amenability are the same; see e.g. \cite[Theorem 1.12]{HaaKra}.

Let $\Phi:E\to F$ be a bounded linear map between Banach spaces. We denote by $\Phi^*:F^*\to E^*$ its adjoint map. We will make extensive use of the following classical result, whose proof we include for the reader's convenience.

\begin{lem}
Let $E,F$ be Banach spaces and let $\Psi:F^*\to E^*$ be a bounded linear map. Then $\Psi$ is $\sigma(F^*,F)$-$\sigma(E^*,E)$-continuous if and only if there exists a bounded linear map $\Phi:E\to F$ such that $\Psi=\Phi^*$.
\end{lem}
\begin{proof}
Assume first that $\Psi=\Phi^*$. Then the weak*-weak*-continuity follows directly from the identity
\begin{align*}
\langle\Psi f,x\rangle=\langle f,\Phi x\rangle,\quad \forall f\in F^*,\ \forall x\in E.
\end{align*}
Conversely, assume that $\Psi$ is $\sigma(F^*,F)$-$\sigma(E^*,E)$-continuous. Recall that we can view $E$ as a subspace of $E^{**}$. Define $\Phi=\Psi^*|_E:E\to F^{**}$. Thus
\begin{align}\label{iden_Phi}
\langle \Phi x,f\rangle=\langle\Psi f,x\rangle,\quad \forall f\in F^*,\ \forall x\in E,
\end{align}
which shows that $\Phi x$ is $\sigma(F^*,F)$-continuous for all $x\in E$. Therefore $\Phi$ takes values in $F$; see e.g. \cite[\S 3.14]{Rud}. Then the fact that $\Phi^*=\Psi$ also follows from the identity \eqref{iden_Phi}.
\end{proof}

\begin{lem}\label{Lem_M_d+1}
Let $d\geq 1$. If $G$ is $M_{d+1}$-weakly amenable, then it is $M_d$-weakly amenable.
\end{lem}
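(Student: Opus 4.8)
The plan is to realise the inclusion $M_{d+1}(G)\hookrightarrow M_d(G)$ as a weak-$*$-continuous contraction and then transport the relevant closure through it.

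First I would establish the norm-decreasing inclusion $M_{d+1}(G)\subset M_d(G)$ directly from the definition. Given $\varphi\in M_{d+1}(G)$ with a decomposition $\varphi(t_1\cdots t_{d+1})=\xi_1(t_1)\cdots\xi_{d+1}(t_{d+1})$, I would evaluate the last factor at the identity: setting $t_{d+1}=e$ gives $\varphi(t_1\cdots t_d)=\xi_1(t_1)\cdots\xi_{d-1}(t_{d-1})\bigl(\xi_d(t_d)\xi_{d+1}(e)\bigr)$. Defining $\eta_i=\xi_i$ for $i\leq d-1$ and $\eta_d(t)=\xi_d(t)\xi_{d+1}(e)$, with $H_{d+1}=\C$ playing the role of the final Hilbert space, yields a valid $M_d$-decomposition of $\varphi$. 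Since $\|\xi_d(t)\xi_{d+1}(e)\|\leq\|\xi_d(t)\|\,\|\xi_{d+1}(e)\|$, taking suprema and then infima over decompositions gives $\|\varphi\|_{M_d}\leq\|\varphi\|_{M_{d+1}}$. In particular the $M_{d+1}$-unit ball sits inside the $M_d$-unit ball.

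Next I would identify the predual map. Because $\|\varphi\|_{M_d}\leq\|\varphi\|_{M_{d+1}}$, the $M_d$-unit ball of multipliers contains the $M_{d+1}$-unit ball, so directly from the definitions of the $X_d$- and $X_{d+1}$-norms one gets $\|g\|_{X_{d+1}}\leq\|g\|_{X_d}$ for every $g\in\C[G]$. Hence the identity on $\C[G]$ extends to a contraction $\Phi:X_d(G)\to X_{d+1}(G)$. A short duality computation shows that the adjoint $\Phi^*:M_{d+1}(G)\to M_d(G)$ agrees with the inclusion $\iota$ on the dense subspace $\C[G]$, hence everywhere by density; by the preceding lemma, $\iota=\Phi^*$ is $\sigma(M_{d+1},X_{d+1})$-$\sigma(M_d,X_d)$-continuous.

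Finally I would transport the closure. Assuming $G$ is $M_{d+1}$-weakly amenable with constant $C$, the constant function $1$ lies in the $\sigma(M_{d+1},X_{d+1})$-closure of $\{\varphi\in\C[G]:\|\varphi\|_{M_{d+1}}\leq C\}$. Applying the continuous map $\iota$, using $\iota(\overline{A})\subset\overline{\iota(A)}$ together with the containment of this set in $\{\varphi\in\C[G]:\|\varphi\|_{M_d}\leq C\}$, places $1=\iota(1)$ in the $\sigma(M_d,X_d)$-closure of the latter set, which is exactly $M_d$-weak amenability with the same constant $C$. The only genuinely delicate point is the second step: verifying that the inclusion is weak-$*$-continuous, which is precisely where the identification of the predual contraction $\Phi$ and the quoted functional-analytic lemma are needed; everything else is a direct unwinding of the definitions.
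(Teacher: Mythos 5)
Your argument is correct and is essentially the paper's own proof with the details written out: the paper likewise observes that $\iota\colon M_{d+1}(G)\hookrightarrow M_d(G)$ is contractive, that the identity on $\C[G]$ extends to a contraction $X_d(G)\to X_{d+1}(G)$ whose adjoint is $\iota$, and that the resulting weak*-weak* continuity of $\iota$ transports the closure. One caution on wording: the density that justifies $\Phi^*=\iota$ is the density of $\C[G]$ in the predual $X_d(G)$ (true by construction of $X_d$), not density of $\C[G]$ in $M_{d+1}(G)$ (false in general); that is, for each fixed $\varphi\in M_{d+1}(G)$ the two functionals $\Phi^*(\varphi)$ and $\iota(\varphi)$ agree on $\C[G]\subset X_d(G)$ and hence coincide.
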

\begin{proof}
Observe that the inclusion $\iota:M_{d+1}(G)\hookrightarrow M_d(G)$ is contractive. Moreover, by the definition of the norm \eqref{norm_X_d}, the identity map $\C[G]\to\C[G]$ extends to a contractive map $X_d(G)\to X_{d+1}(G)$ whose dual map is $\iota$. This shows that $\iota$ is weak*-weak* continuous. The result follows directly from this.
\end{proof}

We will say that $G$ has the $M_d$-AP if the constant function $1$ is in the $\sigma(M_d(G),X_d(G))$-closure of $\C[G]$. It readily follows that every $M_d$-weakly amenable group has the $M_d$-AP. By the same arguments as before, we obtain the following.

\begin{lem}
Let $d\geq 1$. If $G$ has the $M_{d+1}$-AP, then it has the $M_d$-AP.
\end{lem}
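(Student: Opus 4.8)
The plan is to repeat the argument used for Lemma~\ref{Lem_M_d+1} almost verbatim, since the only feature of the inclusion that entered there was its weak*-weak* continuity. First I would record that the inclusion $\iota:M_{d+1}(G)\hookrightarrow M_d(G)$ is contractive. Given a factorisation $\varphi(t_1\cdots t_{d+1})=\xi_1(t_1)\cdots\xi_{d+1}(t_{d+1})$ realising an element of $M_{d+1}(G)$, I would merge the last two factors into the single map $(t_d,t_{d+1})\mapsto\xi_d(t_d)\xi_{d+1}(t_{d+1})$, or equivalently precompose with the multiplication $G\times G\to G$, to produce a $d$-fold factorisation of $\varphi$ whose product of suprema is no larger. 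This yields $\|\varphi\|_{M_d(G)}\le\|\varphi\|_{M_{d+1}(G)}$, which is precisely the inclusion $M_{d+1}(G)\subset M_d(G)$ recalled in the introduction.

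Next I would exhibit $\iota$ as an adjoint. Dualising the previous inequality, the unit ball of $M_{d+1}(G)$ sits inside the unit ball of $M_d(G)$, so for every $g\in\C[G]$ one has $\|g\|_{X_{d+1}(G)}\le\|g\|_{X_d(G)}$; hence the identity on $\C[G]$ extends to a contraction $\Phi:X_d(G)\to X_{d+1}(G)$. A direct computation on the dense subspace $\C[G]$ gives $\langle\Phi^*\varphi,g\rangle=\langle\varphi,g\rangle=\langle\iota\varphi,g\rangle$ for all $\varphi\in M_{d+1}(G)$ and $g\in\C[G]$, so $\Phi^*=\iota$. By the classical functional-analysis lemma stated above, $\iota$ is therefore $\sigma(M_{d+1}(G),X_{d+1}(G))$–$\sigma(M_d(G),X_d(G))$-continuous.

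Finally I would transport the approximation. Assuming $G$ has the $M_{d+1}$-AP, the constant function $1$ lies in the $\sigma(M_{d+1}(G),X_{d+1}(G))$-closure of $\C[G]$. Applying the weak*-weak* continuous map $\iota$, which fixes $\C[G]$ pointwise and satisfies $\iota(1)=1$ since $1\in B(G)\subset M_{d+1}(G)$, sends this closure into the $\sigma(M_d(G),X_d(G))$-closure of $\iota(\C[G])=\C[G]$. Hence $1$ lies in the latter closure, which is exactly the $M_d$-AP. The argument is entirely formal once $\iota$ is known to be weak*-continuous, so I do not expect a genuine obstacle; the only point deserving a line of care is the contractivity of $\iota$ via the merging of factors, everything else being the same duality computation as in Lemma~\ref{Lem_M_d+1}.
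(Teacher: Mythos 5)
Your argument is correct and is essentially the paper's own proof: the paper establishes this lemma by the same argument as Lemma~\ref{Lem_M_d+1}, namely exhibiting the contractive inclusion $\iota:M_{d+1}(G)\hookrightarrow M_d(G)$ as the adjoint of the contraction $X_d(G)\to X_{d+1}(G)$ extending the identity on $\C[G]$, whence $\iota$ is weak*-weak* continuous and the approximating net transports. The only point to tidy is your justification of contractivity: the ``merged'' map $(t_d,t_{d+1})\mapsto\xi_d(t_d)\xi_{d+1}(t_{d+1})$ is defined on $G\times G$ rather than on $G$, so it cannot literally serve as a factor; instead set $t_{d+1}=e$ and take $\eta_d(t)=\xi_d(t)\xi_{d+1}(e)$ as the last factor, which yields a genuine $d$-fold factorisation with the same bound on the product of suprema.
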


Now we will look at the Fourier--Stieltjes algebra to establish the link between these properties and amenability. The main reference here is \cite{Eym}. See also \cite[Appendix F]{Run}. Let $G$ be a group and let $\pi:G\to\mathcal{B}(H)$ be a unitary representation. We say that $\varphi:G\to\C$ is a \textit{coefficient} of $\pi$ if there exist $\xi,\eta\in H$ such that
\begin{align}\label{coeff_rep}
\varphi(t)=\langle\pi(t)\xi,\eta\rangle,\quad\forall t\in G.
\end{align}
The Fourier--Stieltjes algebra $B(G)$ is the space of all coefficients of unitary representations of $G$. For $\varphi\in B(G)$, we define
\begin{align*}
\|\varphi\|_{B(G)}=\inf\{\|\xi\| \|\eta\|\},
\end{align*}
where the infimum runs over all decompositions as in \eqref{coeff_rep}.

The full C${}^*$-algebra $C^*(G)$ is the completion of $\C[G]$ for the norm
\begin{align*}
\|g\|_{C^*(G)}=\sup\left\{\left\|\sum_{t\in G} g(t)\pi(t)\right\|\ :\ \pi:G\to\mathcal{B}(H)\text{ unitary rep.}\right\}.
\end{align*}
Then $C^*(G)^*=B(G)$ with
\begin{align*}
\langle\varphi,g\rangle=\sum_{t\in G}\varphi(t)g(t),\quad\forall\varphi\in B(G),\ \forall g\in\C[G].
\end{align*}

The Fourier algebra $A(G)$ is the subalgebra of $B(G)$ given by coefficients of the left regular representation. It can be defined as the completion of $\C[G]$ in $B(G)$. Leptin showed \cite{Lep} that $G$ is amenable if and only if $A(G)$ has a bounded approximate identity. We will use a reformulation of this result in terms of the weak* topology $\sigma(B(G),C^*(G))$; see \cite[Theorem 1.4.1]{Run} and \cite[Proposition 1.4.2]{Run}.

\begin{prop}\label{Prop_A_dense}
Let $G$ be a group. Then $G$ is amenable if and only if the constant function $1$ is in the $\sigma(B(G),C^*(G))$-closure of $\{\varphi\in A(G)\ :\ \|\varphi\|_{B(G)}\leq 1\}$ in $B(G)$.
\end{prop}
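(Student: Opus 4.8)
The plan is to recognise this statement as a weak*-topological reformulation of Leptin's theorem, the bridge being the existence of a bounded approximate identity in $A(G)$. The first step is to describe the relevant weak* topology concretely. Since $\C[G]$ is dense in $C^*(G)$, a norm-bounded net $(\varphi_\alpha)$ in $B(G)$ converges to $1$ in the $\sigma(B(G),C^*(G))$ topology if and only if $\langle\varphi_\alpha,g\rangle\to\langle 1,g\rangle$ for every $g\in\C[G]$; testing against $g=\delta_t$ shows this is equivalent to the pointwise convergence $\varphi_\alpha(t)\to 1$ for all $t\in G$. Thus the assertion that $1$ lies in the $\sigma(B(G),C^*(G))$-closure of $\{\varphi\in A(G):\|\varphi\|_{B(G)}\leq 1\}$ means precisely that there is a net in the unit ball of $A(G)$ converging to $1$ pointwise.

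For the direction ``closure $\Rightarrow$ amenable'', I would take such a net $(\varphi_\alpha)$ with $\|\varphi_\alpha\|_{B(G)}\leq 1$ and $\varphi_\alpha\to 1$ pointwise, and show it is a contractive approximate identity for $A(G)$. Since $A(G)$ carries the restriction of the $B(G)$ norm, we have $\|\varphi_\alpha\|_{A(G)}\leq 1$. For $f\in\C[G]$ the product $\varphi_\alpha f$ is supported on the finite set $\mathrm{supp}(f)$, and $(\varphi_\alpha f-f)(t)=(\varphi_\alpha(t)-1)f(t)\to 0$ for each $t$; as all norms on the finite-dimensional space of functions supported on $\mathrm{supp}(f)$ are equivalent, we obtain $\|\varphi_\alpha f-f\|_{A(G)}\to 0$. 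A standard density and uniform-boundedness argument then extends this to all $f\in A(G)$, so $(\varphi_\alpha)$ is an approximate identity bounded by $1$. Leptin's theorem \cite{Lep} (see \cite[Theorem 1.4.1]{Run}) then yields amenability of $G$.

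For the converse, ``amenable $\Rightarrow$ closure'', I would invoke the refined form of Leptin's theorem which, for amenable $G$, supplies a net of normalised positive-definite functions $(\varphi_\alpha)$ in $A(G)$ with $\varphi_\alpha\to 1$ pointwise and $\|\varphi_\alpha\|_{B(G)}=\varphi_\alpha(e)=1$ (equivalently, a contractive approximate identity for $A(G)$). Each such $\varphi_\alpha$ lies in the unit ball of $A(G)$, and by the first step its pointwise convergence to $1$ is exactly weak* convergence to $1$; hence $1$ belongs to the desired closure.

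The routine parts are the finite-support norm-equivalence argument and the density extension. The genuine subtlety is obtaining the bound by $1$ rather than by some constant $C\geq 1$: passing from a merely bounded approximate identity to a contractive one is exactly the strong form of Leptin's theorem, and I would handle this point by citation \cite[Theorem 1.4.1, Proposition 1.4.2]{Run} rather than reprove it here.
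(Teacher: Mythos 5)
Your proposal is correct and takes essentially the same route as the paper: the paper also treats this proposition as a weak*-topological reformulation of Leptin's theorem and disposes of the substance by citing \cite[Theorem 1.4.1]{Run} and \cite[Proposition 1.4.2]{Run}, exactly as you do for the contractive (rather than merely bounded) approximate identity. The details you supply --- the equivalence, for norm-bounded nets, of $\sigma(B(G),C^*(G))$-convergence and pointwise convergence via density of $\C[G]$ in $C^*(G)$, and the passage from a pointwise-convergent net in the unit ball of $A(G)$ to a contractive approximate identity --- are precisely the routine translation the paper leaves implicit.
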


\begin{rmk}\label{Rmk_C_dense}
Since $A(G)$ is the norm closure of $\C[G]$ in $B(G)$, Proposition \ref{Prop_A_dense} can be restated as follows: $G$ is amenable if and only if the constant function $1$ is in the $\sigma(B(G),C^*(G))$-closure of $\{\varphi\in \C[G]\ :\ \|\varphi\|_{B(G)}\leq 1\}$ in $B(G)$.
\end{rmk}

\begin{cor}\label{Cor_amen_M_d}
Every amenable group is $M_d$-weakly amenable for all $d\geq 1$.
\end{cor}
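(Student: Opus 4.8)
The plan is to transfer the characterisation of amenability from Remark \ref{Rmk_C_dense} to the $M_d$ setting, following exactly the template of the proof of Lemma \ref{Lem_M_d+1}. Since $G$ is amenable, Remark \ref{Rmk_C_dense} provides a net $(\varphi_i)$ in $\C[G]$ with $\|\varphi_i\|_{B(G)}\leq 1$ converging to the constant function $1$ in the topology $\sigma(B(G),C^*(G))$. The goal is then to show that this very net witnesses $M_d$-weak amenability, in fact with constant $C=1$, so that amenable groups turn out to be $M_d$-weakly amenable with optimal constant.

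The key step is to show that the inclusion $\iota:B(G)\hookrightarrow M_d(G)$ is $\sigma(B(G),C^*(G))-\sigma(M_d(G),X_d(G))$-continuous. By the classical duality lemma recalled above, it suffices to exhibit a bounded linear map $\Phi:X_d(G)\to C^*(G)$ with $\Phi^*=\iota$. I would take $\Phi$ to be the extension to $X_d(G)$ of the identity map on $\C[G]$. To see that this extends contractively, recall that the chain $B(G)\subset M_d(G)$ is contractive, i.e.\ $\|\varphi\|_{M_d(G)}\leq\|\varphi\|_{B(G)}$, so the unit ball of $B(G)$ is contained in that of $M_d(G)$. Taking suprema in the definitions of the predual norms (noting that $\|g\|_{C^*(G)}$ equals the supremum of $|\langle\varphi,g\rangle|$ over the unit ball of $B(G)$) then yields $\|g\|_{C^*(G)}\leq\|g\|_{X_d(G)}$ for every $g\in\C[G]$. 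A direct pairing computation shows that the adjoint of this extension restricts to the identity on $\C[G]$, hence agrees with $\iota$ on the weak*-dense subspace $\C[G]$ and so equals $\iota$, exactly as in Lemma \ref{Lem_M_d+1}.

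With weak*-weak* continuity established, $\iota(\varphi_i)=\varphi_i$ converges to $\iota(1)=1$ in $\sigma(M_d(G),X_d(G))$; here one uses that $1\in B(G)$, being a coefficient of the trivial representation, and that $\iota$ fixes the constant function $1$. Since each $\varphi_i\in\C[G]$ satisfies $\|\varphi_i\|_{M_d}\leq\|\varphi_i\|_{B(G)}\leq 1$, the constant function $1$ lies in the $\sigma(M_d(G),X_d(G))$-closure of $\{\varphi\in\C[G]\ :\ \|\varphi\|_{M_d}\leq 1\}$, which is precisely the statement that $G$ is $M_d$-weakly amenable with $C=1$. As $d\geq 1$ was arbitrary, this proves the corollary.

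I do not expect a genuine obstacle here: the argument is structurally identical to Lemma \ref{Lem_M_d+1}, with $B(G)$ and $C^*(G)$ playing the roles of $M_{d+1}(G)$ and $X_{d+1}(G)$. The only points that demand a little care are the norm inequality $\|g\|_{C^*(G)}\leq\|g\|_{X_d(G)}$, which follows from the containment of unit balls dualised through the predual norms, and the identification of the predual map as the identity on $\C[G]$; both are routine once the duality lemma is invoked.
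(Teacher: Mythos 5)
Your proposal is correct and follows essentially the same route as the paper's own proof: both establish that the contractive inclusion $\iota:B(G)\hookrightarrow M_d(G)$ is weak*-weak* continuous by showing the identity on $\C[G]$ extends to a contraction $X_d(G)\to C^*(G)$ whose adjoint is $\iota$, and then push the net from Remark \ref{Rmk_C_dense} through $\iota$. The only cosmetic difference is that the paper justifies the contractivity of $\iota$ explicitly via the decomposition \eqref{phi=<pi>}, whereas you quote it as a known containment; the substance is identical.
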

\begin{proof}
The proof is essentially the same as that of Lemma \ref{Lem_M_d+1}. The inclusion $\iota:B(G)\hookrightarrow M_d(G)$ is contractive because every coefficient of a unitary representation $\varphi=\langle\pi(\,\cdot\,)\xi,\eta\rangle$ satisfies
\begin{align}\label{phi=<pi>}
\varphi(t_1\cdots t_d)=\langle\pi(t_1)\cdots\pi(t_d)\xi,\eta\rangle,\quad\forall t_1,\ldots,t_d\in G.
\end{align}
Moreover, for every $g\in\C[G]$,
\begin{align*}
\|g\|_{X_d(G)} &=\sup\left\{\left|\sum_{t\in G}\varphi(t)g(t)\right|\ :\ \varphi\in M_d(G),\ \|\varphi\|_{M_d(G)}\leq 1\right\}\\
&\geq\sup\left\{\left|\sum_{t\in G}\varphi(t)g(t)\right|\ :\ \varphi\in B(G),\ \|\varphi\|_{B(G)}\leq 1\right\}\\
%&=\sup\left\{\left|\sum_{t\in G}\langle\pi(t)\xi,\eta\rangle g(t)\right|\ :\ \pi:G\to\mathcal{B}(H)\text{ unitary rep., } \|\xi\|,\|\eta\|\leq 1\right\}\\
%&=\sup\left\{\left\|\sum_{t\in G} g(t)\pi(t)\right\|\ :\ \pi:G\to\mathcal{B}(H)\text{ unitary rep.}\right\}\\
&=\|g\|_{C^*(G)}.
\end{align*}
Thus the identity map $\C[G]\to\C[G]$ extends to a contractive map $X_d(G)\to C^*(G)$ whose dual map is $\iota$. This shows that $\iota$ is weak*-weak* continuous. Then the result follows from Proposition \ref{Prop_A_dense} and Remark \ref{Rmk_C_dense}.
\end{proof}

Now we are ready to give the proof of Theorem \ref{Thm_1}.

\begin{proof}[Proof of Theorem \ref{Thm_1}]
By Theorem \ref{Thm_Pis}, we have $M_d(G)=B(G)$ for some $d\geq 1$. Observe that this implies that $X_d(G)$ is isomorphic to the full C${}^*$-algebra $C^*(G)$. Since $G$ has the $M_d$-AP, the constant function $1$ is in the $\sigma(B(G),C^*(G))$-closure of $\C[G]$. By Proposition \ref{Prop_A_dense} and Remark \ref{Rmk_C_dense}, $G$ is amenable.
\end{proof}

\section{{\bf Analytic families of representations}}\label{Sec_an_fam}
A representation $\pi:G\to\mathcal{B}(H)$ is said to be \textit{uniformly bounded} if
\begin{align*}
|\pi|=\sup_{t\in G}\|\pi(t)\| <\infty.
\end{align*}
Observe that $\pi$ is unitary if and only if $|\pi|=1$. If $\varphi=\langle\pi(\,\cdot\,)\xi,\eta\rangle$ is a coefficient of a uniformly bounded representation, then $\varphi$ belongs to $M_d(G)$ for all $d\geq 1$ and
\begin{align*}
\|\varphi\|_{M_d(G)}\leq |\pi|^d\|\xi\|\|\eta\|,
\end{align*}
because it can be decomposed as in \eqref{phi=<pi>}.

Let $\Omega$ be an open subset of $\C$ and let $E$ be a Banach space. We say that a function $f:\Omega\to E$ is \textit{holomorphic} if the limit
\begin{align*}
\lim_{z\to z_0}\frac{f(z)-f(z_0)}{z-z_0}
\end{align*}
exists in norm for all $z_0\in\Omega$. See \cite[\S 3.3]{dCaHaa} for different characterisations of Banach space valued holomorphic functions. We will say that a family of representations $\pi_z:G\to\mathcal{B}(H)$ ($z\in\Omega$) on a Hilbert space $H$ is holomorphic if, for every $t\in G$, the function
\begin{align*}
z\in\Omega\longmapsto \pi_z(t)\in\mathcal{B}(H)
\end{align*}
is holomorphic.

\begin{lem}\label{Lem_holom}
Let $\pi_z:G\to\mathcal{B}(H)$ ($z\in\Omega$) be a holomorphic family of uniformly bounded representations. Assume moreover that $z\mapsto |\pi_z|$ is bounded on compact subsets of $\Omega$. Then, for all $\xi,\eta\in H$ and $d\geq 1$, the function
\begin{align*}
z\in\Omega\longmapsto \langle\pi_z(\,\cdot\,)\xi,\eta\rangle \in M_d(G)
\end{align*}
is holomorphic.
\end{lem}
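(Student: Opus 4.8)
The plan is to exploit that $M_d(G)$ is the dual space $X_d(G)^*$ and to establish holomorphy through the weak* topology, rather than by estimating norm difference quotients directly. Write $\varphi_z=\langle\pi_z(\,\cdot\,)\xi,\eta\rangle$. First I would record that $z\mapsto\varphi_z$ is locally bounded in $M_d(G)$: by the decomposition \eqref{phi=<pi>} we have $\|\varphi_z\|_{M_d(G)}\leq|\pi_z|^d\|\xi\|\|\eta\|$, and since $z\mapsto|\pi_z|$ is bounded on compact subsets of $\Omega$ by hypothesis, so is $z\mapsto\|\varphi_z\|_{M_d(G)}$.

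Next I would check holomorphy against the predual. Since $X_d(G)$ is the completion of $\C[G]$, it suffices first to treat $g\in\C[G]$, where
\begin{align*}
\langle\varphi_z,g\rangle=\sum_{t\in G}\langle\pi_z(t)\xi,\eta\rangle\,g(t)
\end{align*}
is a finite sum whose summands $z\mapsto\langle\pi_z(t)\xi,\eta\rangle$ are holomorphic because the family $\pi_z$ is holomorphic; hence $z\mapsto\langle\varphi_z,g\rangle$ is holomorphic. For a general $y\in X_d(G)$, choose $g_n\in\C[G]$ with $g_n\to y$; then on any compact $K\subseteq\Omega$,
\begin{align*}
\sup_{z\in K}\bigl|\langle\varphi_z,g_n-y\rangle\bigr|\leq\Bigl(\sup_{z\in K}\|\varphi_z\|_{M_d(G)}\Bigr)\|g_n-y\|_{X_d(G)}\longrightarrow0,
\end{align*}
so $z\mapsto\langle\varphi_z,y\rangle$ is a locally uniform limit of holomorphic functions, and is therefore holomorphic for every $y\in X_d(G)$.

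Finally I would conclude with a standard characterisation of vector-valued holomorphy, as in \cite[\S 3.3]{dCaHaa}: a locally bounded function into the dual Banach space $M_d(G)=X_d(G)^*$ that becomes holomorphic after pairing with every element of the (norming) predual $X_d(G)$ is holomorphic in norm. Concretely, this comes from representing $\varphi_z$ through its weak* Cauchy integral $\frac{1}{2\pi\ii}\oint_\gamma(w-z)^{-1}\varphi_w\,dw$ over a small circle $\gamma$ around a given point (with $z$ inside $\gamma$), and differentiating under the integral: the resulting norm estimates are controlled by $\sup_{w\in\gamma}\|\varphi_w\|_{M_d(G)}<\infty$, which holds by local boundedness. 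I expect the only genuine subtlety to be precisely this last point. The holomorphy of $z\mapsto\pi_z(t)$ is merely pointwise in $t$, so one cannot bound the $M_d(G)$-norm of the difference quotients of $\varphi_z$ directly, since that norm involves a supremum over $t$. Passing through the weak* topology sidesteps this difficulty, because the uniformity that is needed is supplied by the single bound $\sup_{w\in\gamma}|\pi_w|<\infty$ on the fixed contour, which is uniform in $t$.
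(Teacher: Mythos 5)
Your proposal is correct and follows essentially the same route as the paper: bound $\|\varphi_z\|_{M_d(G)}$ by $|\pi_z|^d\|\xi\|\|\eta\|$, verify holomorphy of $z\mapsto\langle\varphi_z,g\rangle$ first for $g\in\C[G]$ and then for all of $X_d(G)$ by locally uniform approximation, and conclude via the characterisation of vector-valued holomorphy in \cite[\S 3.3]{dCaHaa}. The only difference is cosmetic: you spell out the weak*~Cauchy-integral argument behind that last characterisation, which the paper simply cites, and your diagnosis of why norm difference quotients cannot be estimated directly (the supremum over $t$) matches the reason the paper works through the predual.
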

\begin{proof}
Let $\psi_z(t)=\langle\pi_z(t)\xi,\eta\rangle$. By the discussion above, $\psi_z$ is an element of $M_d(G)$ with
\begin{align*}
\|\psi_z\|_{M_d(G)}\leq |\pi_z|^d\|\xi\|\|\eta\|,
\end{align*}
for all $z\in\Omega$. Moreover, for all $t\in G$, the function $z\mapsto\psi_z(t)$ is holomorphic in the usual sense; see \cite[\S 3.3]{dCaHaa}. Now observe that
\begin{align*}
\psi_z(t)=\langle\psi_z,\delta_t\rangle_{M_d(G),X_d(G)},\quad\forall t\in G.
\end{align*}
Therefore, by linearity, for every $g\in\C[G]$, the function
\begin{align*}
z\in\Omega\longmapsto\langle\psi_z,g\rangle_{M_d(G),X_d(G)}
\end{align*}
is holomorphic. Now let $g\in X_d(G)$ and let $(g_n)$ be a sequence in $\C[G]$ such that $\|g-g_n\|_{X_d(G)}\to 0$. Then
\begin{align*}
|\langle\psi_z,g\rangle-\langle\psi_z,g_n\rangle| &\leq \|\psi_z\|_{M_d(G)}\|g-g_n\|_{X_d(G)}\\
&\leq |\pi_z|^d\|\xi\|\|\eta\|\|g-g_n\|_{X_d(G)}.
\end{align*}
Since $|\pi_z|$ is bounded on compact subsets of $\Omega$, the convergence is uniform on such sets, and therefore the limit is analytic. We conclude that
\begin{align*}
z\in\Omega\longmapsto\langle\psi_z,g\rangle_{M_d(G),X_d(G)}
\end{align*}
is holomorphic for all $g\in X_d(G)$, which means that $z\mapsto\psi_z$ is holomorphic; see \cite[\S 3.3]{dCaHaa}.
\end{proof}

Valette \cite{Val} devised a way of proving weak amenability by using holomorphic families of uniformly bounded representations. The following result is a simple adaptation of \cite{Val} to our new setting. We will denote by $\D$ the open unit disk of $\C$.

\begin{prop}\label{Prop_hol_fam}
Let $G$ be a countable discrete group with identity element $e$. Assume there is a proper function $\ell:G\to\N$ such that $\ell(e)=0$, and a holomorphic family $\{\pi_z\}_{z\in\D}$ of uniformly bounded representations on a Hilbert space $H$ such that $\pi_r$ is unitary for $0<r<1$, and $z\mapsto |\pi_z|$ is bounded on compact subsets of $\D$. If there is $\xi\in H$ such that
\begin{align*}
z^{\ell(t)}=\langle\pi_z(t)\xi,\xi\rangle,\quad\forall z\in\D,\ \forall t\in G,
\end{align*}
then $G$ is $M_d$-weakly amenable for every $d\geq 1$.
\end{prop}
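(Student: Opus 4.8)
The plan is to realise the constant function $1$ as a weak* limit of the diagonal coefficients $\psi_s(t)=s^{\ell(t)}=\langle\pi_s(t)\xi,\xi\rangle$ as $s\to 1^-$, and to approximate each individual $\psi_s$ by finitely supported functions of controlled $M_d$-norm coming from the Taylor expansion of $z\mapsto\psi_z$. Set $C=\|\xi\|^2$; I expect to prove $M_d$-weak amenability with exactly this constant, uniformly in $d$.

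First I would record two properties of $\psi_s$ on the real interval $(0,1)$. Since $\pi_s$ is unitary there, $\psi_s$ is a coefficient of a unitary representation, so $\psi_s\in B(G)\subset M_d(G)$ with $\|\psi_s\|_{M_d}\le\|\psi_s\|_{B(G)}\le C$ for every $d$. Moreover $\psi_s\to 1$ in the $\sigma(M_d(G),X_d(G))$-topology as $s\to 1^-$: it suffices to test against $g\in\C[G]$, which is dense in $X_d(G)$, where $\langle\psi_s,g\rangle=\sum_t s^{\ell(t)}g(t)\to\sum_t g(t)=\langle 1,g\rangle$ by finiteness of the support; the uniform bound $\|\psi_s\|_{M_d}\le C$ together with $\|1\|_{M_d}=1$ then upgrades this to all of $X_d(G)$.

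Next, by Lemma \ref{Lem_holom} the map $z\mapsto\psi_z\in M_d(G)$ is holomorphic on $\D$, so its Taylor series $\psi_z=\sum_{n\ge 0}c_n z^n$ converges in $M_d$-norm for $|z|<1$. Testing against $\delta_t$ and comparing with $\psi_z(t)=z^{\ell(t)}$ identifies the coefficients as $c_n=\mathds{1}_{\ell^{-1}(n)}$; these are finitely supported because $\ell$ is proper, hence $c_n\in\C[G]$. Consequently, for each fixed $s\in(0,1)$ the partial sums $S_N(s)=\sum_{n=0}^N c_n s^n$ lie in $\C[G]$ and converge to $\psi_s$ in $M_d$-norm.

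To conclude I would combine these steps through the weak*-closed set $K=\overline{\{\varphi\in\C[G]:\|\varphi\|_{M_d}\le C\}}^{\,\sigma(M_d,X_d)}$. Since $\|S_N(s)\|_{M_d}\to\|\psi_s\|_{M_d}\le C$, a harmless rescaling of $S_N(s)$ by $C/\max(\|S_N(s)\|_{M_d},C)\to 1$ produces a norm-approximating sequence inside the ball, so $\psi_s\in K$ for every $s\in(0,1)$; as $K$ is weak*-closed and $\psi_s\to 1$ weak*, we obtain $1\in K$, which is precisely $M_d$-weak amenability with constant $C$. The main obstacle is this uniform norm control: every finitely supported approximant must be bounded by a single constant independent of $s$, $N$ and $d$. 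This is why the argument uses the unitarity of $\pi_s$ only on the real axis, where $\|\psi_s\|_{M_d}\le\|\xi\|^2$, combined with the norm-convergence of the Taylor partial sums, rather than Cauchy estimates over circles $|z|=s$, which would introduce uncontrolled factors $|\pi_z|^d$ and destroy the uniform bound. The interchange of the limits $N\to\infty$ and $s\to 1$ is then legitimate precisely because $K$ is weak*-closed.
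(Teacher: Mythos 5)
Your proof is correct, and it follows the same skeleton as the paper's argument (which is Valette's method): use Lemma \ref{Lem_holom} to make $z\mapsto\psi_z$ holomorphic into $M_d(G)$, exploit unitarity of $\pi_r$ on $(0,1)$ to get $\|\psi_r\|_{M_d}\leq\|\xi\|^2$, produce finitely supported approximants of $\psi_r$ from the power series structure (the coefficients $\mathds{1}_{\ell^{-1}(n)}$ are in $\C[G]$ by properness of $\ell$), and finish by ``pointwise plus uniform boundedness implies weak*'' as $r\to 1$. The one genuine difference is the truncation device. The paper averages $\psi_{re^{\ii\theta}}$ against the F\'ejer kernel, i.e.\ takes Ces\`aro means $\Phi_{N,r}=\sum_{n\leq N}(1-\tfrac{n}{N+1})\,\mathds{1}_{\ell^{-1}(n)}r^n$, and cites \cite{Val} for the facts $\|\Phi_{N,r}\|_{M_d}\to 1$ and $\Phi_{N,r}(t)\to 1$; the Ces\`aro smoothing is what forces norm convergence there, since for a merely continuous Banach-valued function on the circle the plain Fourier partial sums need not converge. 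You instead observe that holomorphy on the whole open disc already gives norm convergence of the plain Taylor partial sums $S_N(s)$ at each real $s<1$ (via Cauchy estimates, where the factors $|\pi_z|^d$ enter only qualitatively, to guarantee convergence, not in the final bound), and then you repair the fact that $\|S_N(s)\|_{M_d}$ may slightly exceed $\|\xi\|^2$ by a rescaling $\lambda_N\to 1$. This makes your argument self-contained --- no appeal to F\'ejer's theorem or to \cite{Val} --- at the cost of the extra rescaling step; the paper's version is shorter on the page only because it outsources the summability argument. One cosmetic remark: your constant $C=\|\xi\|^2$ is automatically equal to $1$, since $\ell(e)=0$ forces $1=z^{\ell(e)}=\langle\pi_z(e)\xi,\xi\rangle=\|\xi\|^2$; so, like the paper, you actually obtain $M_d$-weak amenability with constant $1$.
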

\begin{proof}
Let us fix $d\geq 1$ and define
\begin{align*}
\psi_z(t)=z^{\ell(t)},\quad\forall z\in\D,\ \forall t\in G.
\end{align*}
By Lemma \ref{Lem_holom}, $z\mapsto\psi_z$ is a holomorphic map from $\D$ to $M_d(G)$. Let $S^1$ denote the unit circle, and let $F_N:S^1\to\R$ be the F\'ejer kernel:
\begin{align*}
F_N(z)=\sum_{|n|\leq N}\left(1-\frac{|n|}{N+1}\right)z^n,\quad\forall N\in\N,\ \forall z\in S^1.
\end{align*}
For $0<r<1$ and $N\in\N$, we define
\begin{align*}
\Phi_{N,r}=\frac{1}{2\pi}\int_0^{2\pi}F_N\left(e^{\ii\theta}\right)\psi_{re^{\ii\theta}}\, d\theta.
\end{align*}
Then $\Phi_{N,r}\in\C[G]$,
\begin{align*}
\lim_{N\to\infty}\|\Phi_{N,r}\|_{M_d(G)}=1,\quad\forall r\in(0,1),
\end{align*}
and
\begin{align*}
\lim_{r\to 1}\lim_{N\to\infty}\Phi_{N,r}(t)=1,\quad\forall t\in G;
\end{align*}
see \cite{Val} for details. This shows that, for every $\varepsilon>0$, we can find a sequence $(\varphi_n)$ in $\C[G]$, with $\|\varphi_n\|_{M_d(G)}\leq 1+\varepsilon$, converging pointwise to $1$. Since $\C[G]$ is dense in $X_d(G)$, pointwise convergence plus uniform boundedness yield weak* convergence. Therefore the constant function $1$ is in the $\sigma(M_d(G),X_d(G))$-closure of the set
\begin{align*}
\{\varphi\in\C[G]\ :\ \|\varphi\|_{M_d(G)}\leq 1+\varepsilon\},
\end{align*}
for every $\varepsilon>0$. We conclude that $G$ is $M_d$-weakly amenable.
\end{proof}

\begin{proof}[Proof of Theorem \ref{Thm_2}]
Since $G$ acts properly on a finite-dimensional CAT(0) cube complex, the construction in \cite{GueHig} shows that $G$ satisfies the hypotheses of Proposition \ref{Prop_hol_fam}. The uniform bound on compact subsets is given by \cite[Proposition 5.3]{GueHig}. We conclude that $G$ is $M_d$-weakly amenable for every $d\geq 1$.
\end{proof}

\section{{\bf Stability properties}}\label{Sec_stab}
In this section we study how the $M_d$-AP behaves with respect to certain group operations.

\begin{lem}
Let $d\geq 1$. Let $G$ be a group and $\Gamma\subset G$ a subgroup. If $G$ has the $M_d$-AP, then so does $\Gamma$.
\end{lem}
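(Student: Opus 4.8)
The plan is to show that the $M_d$-AP passes to subgroups by exhibiting a natural way to transport multipliers and their predual between $G$ and $\Gamma$. The key structural fact I would exploit is the restriction–extension duality: restricting a function on $G$ to $\Gamma$ should be weak*-continuous, and for that I want to identify an underlying contractive map on the preduals $X_d$ whose adjoint is restriction. First I would verify that the restriction map $R:M_d(G)\to M_d(\Gamma)$ given by $R\varphi=\varphi|_\Gamma$ is a well-defined contraction. This is immediate from the definition of the $M_d$-norm: any factorisation $\varphi(t_1\cdots t_d)=\xi_1(t_1)\cdots\xi_d(t_d)$ valid for all $t_i\in G$ restricts to a factorisation of $\varphi|_\Gamma$ over $\Gamma$ simply by restricting each $\xi_i$ to $\Gamma$, and the product of suprema over $\Gamma$ is no larger than over $G$, so $\|\varphi|_\Gamma\|_{M_d(\Gamma)}\leq\|\varphi\|_{M_d(G)}$.

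Next I would identify the predual map. The inclusion $\C[\Gamma]\hookrightarrow\C[G]$ (extending a finitely supported function on $\Gamma$ by zero) should extend to a contractive linear map $j:X_d(\Gamma)\to X_d(G)$, and I would check that its adjoint $j^*:M_d(G)\to M_d(\Gamma)$ is exactly the restriction map $R$. To see that $j$ is contractive, I would compute, for $g\in\C[\Gamma]$, the norm $\|g\|_{X_d(G)}$ as a supremum over $\varphi\in M_d(G)$ with $\|\varphi\|_{M_d(G)}\leq 1$ of $|\sum_{t\in\Gamma}\varphi(t)g(t)|$; since restricting such $\varphi$ to $\Gamma$ gives an element of the unit ball of $M_d(\Gamma)$, this supremum is bounded by $\|g\|_{X_d(\Gamma)}$. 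The adjoint relation $\langle j^*\varphi,g\rangle=\langle\varphi,jg\rangle=\sum_{t\in\Gamma}\varphi(t)g(t)=\langle\varphi|_\Gamma,g\rangle$ holds for all $g\in\C[\Gamma]$, confirming $j^*=R$. By the classical functional-analytic lemma stated earlier in the excerpt, $R$ is therefore $\sigma(M_d(G),X_d(G))$–$\sigma(M_d(\Gamma),X_d(\Gamma))$-continuous.

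With weak*-continuity of $R$ in hand, the conclusion follows by a soft argument. Since $G$ has the $M_d$-AP, the constant function $1$ on $G$ lies in the $\sigma(M_d(G),X_d(G))$-closure of $\C[G]$, so there is a net $(\varphi_\alpha)$ in $\C[G]$ converging weak* to $1$. Applying the continuous map $R$, the net $(\varphi_\alpha|_\Gamma)$ converges weak* in $M_d(\Gamma)$ to $R(1)=1_\Gamma$, the constant function $1$ on $\Gamma$. Each $\varphi_\alpha|_\Gamma$ is finitely supported on $\Gamma$, hence lies in $\C[\Gamma]$, so $1_\Gamma$ is in the $\sigma(M_d(\Gamma),X_d(\Gamma))$-closure of $\C[\Gamma]$, which is precisely the statement that $\Gamma$ has the $M_d$-AP.

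I do not expect any genuine obstacle here; the argument is a direct analogue of the weak*-continuity technique already used repeatedly in the excerpt (for instance in Lemma \ref{Lem_M_d+1} and Corollary \ref{Cor_amen_M_d}). The only point requiring a little care is confirming that restriction of the factorising maps $\xi_i$ genuinely produces a valid $M_d(\Gamma)$-decomposition with controlled norm, but this is routine once one writes out the definition. The rest is formal duality, so the main task is simply assembling these pieces cleanly.
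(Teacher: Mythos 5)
Your proof is correct and follows essentially the same route as the paper: you show the restriction map $M_d(G)\to M_d(\Gamma)$ is contractive, identify it as the adjoint of the contractive extension-by-zero map $X_d(\Gamma)\to X_d(G)$, and conclude weak*-weak* continuity, from which the result follows. The only difference is cosmetic — you spell out the net argument and the verification of contractivity of the predual map, which the paper leaves as "a simple calculation".
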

\begin{proof}
By the definition of the norm of $M_d$, the restriction map
\begin{align*}
\varphi\in M_d(G)\mapsto \varphi|_\Gamma\in M_d(\Gamma)
\end{align*}
is contractive. Moreover, a simple calculation shows that this is the dual map of
\begin{align*}
g\in X_d(\Gamma)\mapsto\tilde{g}\in X_d(G),
\end{align*}
defined by
\begin{align}\label{def_gtild}
\tilde{g}(t)=\begin{cases}
g(t), & t\in\Gamma\\ 0 & t\in G\setminus\Gamma,
\end{cases}
\end{align}
which is also contractive. Therefore the restriction map $\varphi\mapsto \varphi|_\Gamma$ is weak*-weak* continuous and the result follows.
\end{proof}

\begin{rmk}
We do not know if $M_d$-AP is stable under direct products.
\end{rmk}

Now we turn to extensions of groups. The proof of the following lemma is inspired by \cite[Theorem 1.15]{HaaKra}.

\begin{lem}\label{Lem_M_d_ext}
Let $G$ be a group and $\Gamma\lhd G$ a normal amenable subgroup. If $G/\Gamma$ has the $M_d$-AP, then so does $G$.
\end{lem}
\begin{proof}
For each $f\in\C[G]$, let us define a map $\Phi_f:C^*(G)\to C^*(G)$ by
\begin{align*}
\Phi_f(g)=f\ast g,\quad\forall g\in\C[G].
\end{align*}
Observe that this is simply the product in the C${}^*$-algebra $C^*(G)$, so $\Phi_f$ is well defined and $\|\Phi_f\|=\|f\|_{C^*(G)}$. Now let $P:C^*(G)\to C^*(\Gamma)$ be the extension of the restriction map: $g\in\C[G]\mapsto g|_\Gamma\in\C[\Gamma]$. Then $P$ is a contraction; see e.g. \cite[Proposition 3.5]{Pis3}. Defining $\Psi_f=P\circ\Phi_f$, we get a bounded map from $C^*(G)$ to $C^*(\Gamma)$ such that
\begin{align*}
\Psi_f(g)=\left.\left(f\ast g\right)\right|_\Gamma,\quad\forall g\in\C[G].
\end{align*}
Then the adjoint map $\Psi_f^*:B(\Gamma)\to B(G)$ is weak*-weak* continuous. A simple calculation shows that
\begin{align*}
\Psi_f^*(\varphi)=\check{f}\ast\tilde{\varphi},\quad\forall\varphi\in B(\Gamma),
\end{align*}
where $\check{f}(t)=f(t^{-1})$, and $\tilde{\varphi}\in B(G)$ is defined as in \eqref{def_gtild}. Since $\Gamma$ is amenable, by Proposition \ref{Prop_A_dense}, there is a net $(\varphi_i)$ in $\C[\Gamma]$ converging to $1$ in $\sigma(B(\Gamma),C^*(\Gamma))$. Thus, $\Psi_f^*(\varphi_i)$ converges to $\check{f}\ast\mathds{1}_\Gamma$ in $\sigma(B(G),C^*(G))$, where $\mathds{1}_\Gamma$ is the indicator function of $\Gamma$ in $G$. As we saw in the proof of Corollary \ref{Cor_amen_M_d}, the inclusion $B(G)\hookrightarrow M_d(G)$ is weak*-weak* continuous, hence
\begin{align*}
\check{f}\ast\tilde{\varphi}_i\to\check{f}\ast\mathds{1}_\Gamma\quad\text{in } \sigma(M_d(G),X_d(G)).
\end{align*}
Therefore
\begin{align}\label{incl_w*cl}
\left\{ f\ast\mathds{1}_\Gamma\ :\ f\in\C[G]\right\} \subset \overline{\C[G]}^{\sigma(M_d(G),X_d(G))}.
\end{align}
Thus, we only need to show that the constant function $1$ is in the $\sigma(M_d(G),X_d(G))$-closure of the left hand side of \eqref{incl_w*cl}. Let $q:G\to G/\Gamma$ be the quotient map, and let us define $T:\C[G]\to\C[G/\Gamma]$ by
\begin{align}\label{def_T}
T(f)(q(t))=f\ast\mathds{1}_\Gamma(t)=\sum_{s\in\Gamma}f(ts),\quad\forall f\in\C[G],\ \forall t\in G.
\end{align}
Observe that this is well defined since $f\ast\mathds{1}_\Gamma$ is constant on each coset. Let $\Theta:M_d(G/\Gamma)\to M_d(G)$ be the contraction given by $\Theta(\psi)=\psi\circ q$ (as observed by Pisier, this is actually an isometry, which can be shown by choosing a lifting $G/\Gamma\to G$). Now observe that, for all $f\in\C[G]$,
\begin{align*}
\langle\Theta(\psi),f\rangle &= \sum_{t\in G} \psi(q(t))f(t)\\
&= \sum_{x\in G/\Gamma}\psi(x)\sum_{s\in\Gamma}f(\sigma(x)s)\\
&= \langle\psi,T(f)\rangle,
\end{align*}
where $\sigma:G/\Gamma\to G$ is any lifting. This shows that $T$ extends to a bounded map $T:X_d(G)\to X_d(G/\Gamma)$ and that $T^*=\Theta$. Now we will use the fact that $G/\Gamma$ has the $M_d$-AP. Let $(\psi_i)$ be a net in $\C[G/\Gamma]$ converging to $1$ in $\sigma(M_d(G/\Gamma),X_d(G/\Gamma))$. Again, taking a lifting $G/\Gamma\to G$, one can see that the map \eqref{def_T} is onto. So, for each $i$, there exists $f_i\in\C[G]$ such that $T(f_i)=\psi_i$. In other words,
\begin{align*}
\Theta(\psi_i)=\psi_i\circ q=f_i\ast\mathds{1}_\Gamma.
\end{align*}
Since $\Theta$ is weak*-weak* continuous, we know that $\Theta(\psi_i)$ converges to $1$ in $\sigma(M_d(G),X_d(G))$. Hence, by \eqref{incl_w*cl}, the constant function $1$ is in the $\sigma(M_d(G),X_d(G))$-closure of $\C[G]$. We conclude that $G$ has the $M_d$-AP.
\end{proof}

This lemma allows us to give examples of non-weakly amenable groups satisfying $M_d$-AP for all $d\geq 1$. Recall that, if a group $\Gamma$ acts by automorphisms on another group $G$, we can define the semidirect product $\Gamma\ltimes G$, which coincides with $\Gamma\times G$ as a set, but the group operation is twisted by the action $\Gamma\curvearrowright G$. We get an exact sequence:
\begin{align*}
1\to G\to \Gamma\ltimes G\to\Gamma\to 1.
\end{align*}

\begin{cor}\label{Cor_SL2}
The group $\operatorname{SL}(2,\Z)\ltimes\Z^2$ has $M_d$-AP for all $d\geq 1$.
\end{cor}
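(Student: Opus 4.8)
The plan is to combine the two structural tools established earlier in the paper: that $M_d$-AP is inherited from the quotient by a normal amenable subgroup, and that Theorem~\ref{Thm_2} supplies $M_d$-weak amenability (hence $M_d$-AP) for groups acting properly on finite-dimensional CAT(0) cube complexes. The whole corollary should then reduce to verifying a single geometric fact about $\operatorname{SL}(2,\Z)$.

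First I would isolate the relevant normal subgroup. In $G=\operatorname{SL}(2,\Z)\ltimes\Z^2$, the copy of $\Z^2$ is a normal abelian, hence amenable, subgroup, and the quotient $G/\Z^2$ is isomorphic to $\operatorname{SL}(2,\Z)$. By the extension result quoted above, namely that if $G/\Gamma$ has $M_d$-AP for a normal amenable $\Gamma\lhd G$ then $G$ has $M_d$-AP, the problem reduces to showing that $\operatorname{SL}(2,\Z)$ itself satisfies $M_d$-AP for every $d\geq 1$.

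For this reduced statement I would exhibit a proper action of $\operatorname{SL}(2,\Z)$ on a finite-dimensional CAT(0) cube complex and then invoke Theorem~\ref{Thm_2}. The natural choice is the Bass--Serre tree of the amalgamated decomposition $\operatorname{SL}(2,\Z)\cong\Z/4\ast_{\Z/2}\Z/6$: this is a locally finite ($(2,3)$-biregular) tree, and thus a one-dimensional CAT(0) cube complex, on which $\operatorname{SL}(2,\Z)$ acts with finite vertex and edge stabilizers. Since the tree is locally finite and all stabilizers are finite, the action is metrically proper, so Theorem~\ref{Thm_2} yields that $\operatorname{SL}(2,\Z)$ is $M_d$-weakly amenable, and in particular has $M_d$-AP, for all $d\geq 1$. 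Combining the two paragraphs completes the argument.

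I do not expect a genuine obstacle, since both main ingredients are already available; the only point requiring care is to confirm that the action on the Bass--Serre tree is proper in the metric sense demanded by Theorem~\ref{Thm_2}, which follows from local finiteness of the tree together with finiteness of the stabilizers. An alternative would be to exploit that $\operatorname{SL}(2,\Z)$ is virtually free, but the direct action on the Bass--Serre tree avoids finite-index bookkeeping and lets us apply Theorem~\ref{Thm_2} verbatim.
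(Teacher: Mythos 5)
Your proposal is correct and follows essentially the same route as the paper: identify $\Z^2$ as a normal amenable subgroup with quotient $\operatorname{SL}(2,\Z)$, apply the extension lemma (Lemma~\ref{Lem_M_d_ext}), and handle $\operatorname{SL}(2,\Z)$ via its proper action on a tree (the paper cites Serre; your Bass--Serre tree for $\Z/4\ast_{\Z/2}\Z/6$ is exactly that action) together with Theorem~\ref{Thm_2}. The extra care you take in verifying metric properness of the tree action is a welcome but inessential elaboration of the same argument.
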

\begin{proof}
Let $G=\operatorname{SL}(2,\Z)\ltimes\Z^2$. We have an exact sequence:
\begin{align*}
1\to\Z^2\to G\to\operatorname{SL}(2,\Z)\to 1.
\end{align*}
So $\Z^2$ is a normal amenable subgroup of $G$. On the other hand, $\operatorname{SL}(2,\Z)$ has a proper isometric action on a tree (which is a $1$-dimensional CAT(0) cube complex); see e.g. \cite[\S 4.2]{Ser}. By Theorem \ref{Thm_2}, it has $M_d$-AP for all $d\geq 1$. Therefore, Lemma \ref{Lem_M_d_ext} tells us that $G$ has $M_d$-AP for all $d\geq 1$.
\end{proof}

As shown by Haagerup \cite{Haa}, $\operatorname{SL}(2,\Z)\ltimes\Z^2$ is not weakly amenable. See also \cite{Oza} for a different proof of this fact. Thus, having $M_d$-AP for all $d\geq 1$ does not imply weak amenability. As a matter of fact, the results in \cite{Oza} allow us to give many more examples in this direction. Let $\Gamma$ and $G$ be two groups, and let $\bigoplus_G \Gamma$ be the direct sum of copies of $\Gamma$ indexed by $G$. Then $G$ has an action on $\bigoplus_G \Gamma$ by shifts. The wreath product $\Gamma\wr G$ is defined as the semidirect product
\begin{align*}
G\ltimes\bigoplus_G \Gamma.
\end{align*}

\begin{cor}\label{Cor_wreath}
Let $\Gamma$ be an amenable group and let $G$ be a group with $M_d$-AP. Then the wreath product $\Gamma\wr G$ has $M_d$-AP.
\end{cor}
\begin{proof}
Again, we have an exact sequence:
\begin{align*}
1\to\bigoplus_G\Gamma\to \Gamma\wr G\to G\to 1.
\end{align*}
Since $\bigoplus_G\Gamma$ is a direct sum of amenable groups, it is also amenable. On the other hand, $G$ has $M_d$-AP. Therefore, by Lemma \ref{Lem_M_d_ext}, $\Gamma\wr G$ has $M_d$-AP.
\end{proof}

Ozawa showed that, if $\Gamma$ is not trivial and $G$ is not amenable, then $\Gamma\wr G$ is not weakly amenable; see \cite[Corollary 4]{Oza}. On the other hand, by Theorem \ref{Thm_2}, we know that there are many examples of non-amenable groups satisfying $M_d$-AP for all $d\geq 1$. Thus, Corollary \ref{Cor_wreath} gives us many examples of non-weakly amenable groups satisfying $M_d$-AP for all $d\geq 1$.

\bibliographystyle{plain} 

\bibliography{Bibliography}

\begin{thebibliography}{10}

\bibitem{Alp}
Andrei Alpeev.
\newblock Lamplighters over non-amenable groups are not strongly {U}lam stable.
\newblock {\em arXiv preprint arXiv:2009.11738}, 2022.

\bibitem{Boz}
Marek Bo\.{z}ejko.
\newblock Positive definite bounded matrices and a characterization of amenable
  groups.
\newblock {\em Proc. Amer. Math. Soc.}, 95(3):357--360, 1985.

\bibitem{dCaHaa}
Jean~de Canni\`ere and Uffe Haagerup.
\newblock Multipliers of the {F}ourier algebras of some simple {L}ie groups and
  their discrete subgroups.
\newblock {\em Amer. J. Math.}, 107(2):455--500, 1985.

\bibitem{Day}
Mahlon~M. Day.
\newblock Means for the bounded functions and ergodicity of the bounded
  representations of semi-groups.
\newblock {\em Trans. Amer. Math. Soc.}, 69:276--291, 1950.

\bibitem{Dix}
Jacques Dixmier.
\newblock Les moyennes invariantes dans les semi-groupes et leurs applications.
\newblock {\em Acta Sci. Math. (Szeged)}, 12:213--227, 1950.

\bibitem{EpsMon}
Inessa Epstein and Nicolas Monod.
\newblock Nonunitarizable representations and random forests.
\newblock {\em Int. Math. Res. Not. IMRN}, (22):4336--4353, 2009.

\bibitem{Eym}
Pierre Eymard.
\newblock L'alg\`ebre de {F}ourier d'un groupe localement compact.
\newblock {\em Bull. Soc. Math. France}, 92:181--236, 1964.

\bibitem{GGMT}
Maria Gerasimova, Dominik Gruber, Nicolas Monod, and Andreas Thom.
\newblock Asymptotics of {C}heeger constants and unitarisability of groups.
\newblock {\em J. Funct. Anal.}, 278(11):108457, 30, 2020.

\bibitem{GueHig}
Erik Guentner and Nigel Higson.
\newblock Weak amenability of {$\rm CAT(0)$}-cubical groups.
\newblock {\em Geom. Dedicata}, 148:137--156, 2010.

\bibitem{Haa}
Uffe Haagerup.
\newblock Group {$C^*$}-algebras without the completely bounded approximation
  property.
\newblock {\em J. Lie Theory}, 26(3):861--887, 2016.

\bibitem{HaKndL}
Uffe Haagerup, S\o{}ren Knudby, and Tim de~Laat.
\newblock A complete characterization of connected {L}ie groups with the
  approximation property.
\newblock {\em Ann. Sci. \'{E}c. Norm. Sup\'{e}r. (4)}, 49(4):927--946, 2016.

\bibitem{HaaKra}
Uffe Haagerup and Jon Kraus.
\newblock Approximation properties for group {$C^*$}-algebras and group von
  {N}eumann algebras.
\newblock {\em Trans. Amer. Math. Soc.}, 344(2):667--699, 1994.

\bibitem{HaadLa}
Uffe Haagerup and Tim~de Laat.
\newblock Simple {L}ie groups without the approximation property.
\newblock {\em Duke Math. J.}, 162(5):925--964, 2013.

\bibitem{HaadLa2}
Uffe Haagerup and Tim~de Laat.
\newblock Simple {L}ie groups without the approximation property {II}.
\newblock {\em Trans. Amer. Math. Soc.}, 368(6):3777--3809, 2016.

\bibitem{Jan}
Tadeusz Januszkiewicz.
\newblock For right-angled {C}oxeter groups {$z^{|g|}$} is a coefficient of a
  uniformly bounded representation.
\newblock {\em Proc. Amer. Math. Soc.}, 119(4):1115--1119, 1993.

\bibitem{LafdlS}
Vincent Lafforgue and Mikael de~la Salle.
\newblock Noncommutative {$L^p$}-spaces without the completely bounded
  approximation property.
\newblock {\em Duke Math. J.}, 160(1):71--116, 2011.

\bibitem{Lep}
Horst Leptin.
\newblock Sur l'alg\`ebre de {F}ourier d'un groupe localement compact.
\newblock {\em C. R. Acad. Sci. Paris S\'{e}r. A-B}, 266:A1180--A1182, 1968.

\bibitem{Miz}
Naokazu Mizuta.
\newblock A {B}o\.{z}ejko-{P}icardello type inequality for finite-dimensional
  {${\rm CAT}(0)$} cube complexes.
\newblock {\em J. Funct. Anal.}, 254(3):760--772, 2008.

\bibitem{MonOza}
Nicolas Monod and Narutaka Ozawa.
\newblock The {D}ixmier problem, lamplighters and {B}urnside groups.
\newblock {\em J. Funct. Anal.}, 258(1):255--259, 2010.

\bibitem{Osi}
Denis~V. Osin.
\newblock {$L^2$}-{B}etti numbers and non-unitarizable groups without free
  subgroups.
\newblock {\em Int. Math. Res. Not. IMRN}, (22):4220--4231, 2009.

\bibitem{Oza}
Narutaka Ozawa.
\newblock Examples of groups which are not weakly amenable.
\newblock {\em Kyoto J. Math.}, 52(2):333--344, 2012.

\bibitem{Pim}
Mihai~V. Pimsner.
\newblock Cocycles on trees.
\newblock {\em J. Operator Theory}, 17(1):121--128, 1987.

\bibitem{Pis2}
Gilles Pisier.
\newblock {\em Similarity problems and completely bounded maps}, volume 1618 of
  {\em Lecture Notes in Mathematics}.
\newblock Springer-Verlag, Berlin, expanded edition, 2001.
\newblock Includes the solution to ``The Halmos problem''.

\bibitem{Pis}
Gilles Pisier.
\newblock Are unitarizable groups amenable?
\newblock In {\em Infinite groups: geometric, combinatorial and dynamical
  aspects}, volume 248 of {\em Progr. Math.}, pages 323--362. Birkh\"{a}user,
  Basel, 2005.

\bibitem{Pis3}
Gilles Pisier.
\newblock {\em Tensor products of {$C^*$}-algebras and operator spaces---the
  {C}onnes-{K}irchberg problem}, volume~96 of {\em London Mathematical Society
  Student Texts}.
\newblock Cambridge University Press, Cambridge, 2020.

\bibitem{PytSzw}
Tadeusz Pytlik and Ryszard Szwarc.
\newblock An analytic family of uniformly bounded representations of free
  groups.
\newblock {\em Acta Math.}, 157(3-4):287--309, 1986.

\bibitem{Rud}
Walter Rudin.
\newblock {\em Functional Analysis}.
\newblock International series in pure and applied mathematics. McGraw-Hill,
  1991.

\bibitem{Run}
Volker Runde.
\newblock {\em Amenable {B}anach algebras}.
\newblock Springer Monographs in Mathematics. Springer-Verlag, New York, [2020]
  \copyright 2020.
\newblock A panorama.

\bibitem{SagWis}
Michah Sageev and Daniel~T. Wise.
\newblock The {T}its alternative for {${\rm CAT}(0)$} cubical complexes.
\newblock {\em Bull. London Math. Soc.}, 37(5):706--710, 2005.

\bibitem{Ser}
Jean-Pierre Serre.
\newblock {\em Trees}.
\newblock Springer-Verlag, Berlin-New York, 1980.
\newblock Translated from the French by John Stillwell.

\bibitem{Val2}
Alain Valette.
\newblock Cocycles d'arbres et repr\'{e}sentations uniform\'{e}ment
  born\'{e}es.
\newblock {\em C. R. Acad. Sci. Paris S\'{e}r. I Math.}, 310(10):703--708,
  1990.

\bibitem{Val}
Alain Valette.
\newblock Weak amenability of right-angled {C}oxeter groups.
\newblock {\em Proc. Amer. Math. Soc.}, 119(4):1331--1334, 1993.

\end{thebibliography}

\end{document}